\documentclass[12pt]{amsart}
\usepackage{amssymb, txfonts}

\usepackage{helvet} 

\usepackage{courier} 

\usepackage{eucal} 

\textwidth 6.5in
\textheight 9in

\oddsidemargin 0in
\evensidemargin 0in

\setlength{\marginparwidth}{15mm} 
\setlength{\marginparsep}{2mm}  
\setlength{\marginparpush}{4mm} 
\reversemarginpar 
\normalmarginpar 

\let\oldmarginpar\marginpar 
\renewcommand\marginpar[1]{\-\oldmarginpar{\raggedright\small\sf #1}}

\title[Coordinate  rings of affine group schemes]{On the finite generation of coordinate rings of affine group schemes over discrete valuation rings}

\author{Najmuddin Fakhruddin}
 
\address{School of Mathematics, Tata Institue of Fundamental Research, 
Homi Bhabha Road, Mumbai 400005, India}

\email{naf@math.tifr.res.in}

\newcommand{\nc}{\newcommand}

\nc{\rnc}{\renewcommand}

\nc{\bs}{\backslash}
\nc{\te}{\otimes}
\nc{\lf}{\lfloor} 
\nc{\rf}{\rfloor}
\nc{\lc}{\lceil}  
\nc{\rc}{\rceil}
\nc{\lr}{\longrightarrow}
\nc{\sr}{\stackrel}
\nc{\dar}{\dashrightarrow}
\nc{\thra}{\twoheadrightarrow}

\nc{\mc}{\mathcal}
\nc{\mb}{\mathbb}
\nc{\mf}{\mathbf}
\nc{\mr}{\mathrm}

\nc{\bP}{\mathbb{P}}
\rnc{\P}{\mathbb{P}}
\nc{\Q}{\mathbb{Q}}
\nc{\Z}{\mathbb{Z}}
\nc{\C}{\mathbb{C}}
\nc{\R}{\mathbb{R}}
\nc{\A}{\mathbb{A}}
\nc{\V}{\mathbb{V}}
\nc{\W}{\mathbb{W}}
\nc{\N}{\mathbb{N}}

\nc{\aff}{{\A}^1}
\nc{\naive}{\!\sim_n}
\nc{\Spec}{\mr{Spec}}

\nc{\omx}{\omega_X}

\nc{\ep}{\epsilon}
\nc{\ve}{\varepsilon}

\nc{\wt}{\widetilde}
\nc{\wh}{\widehat}
\nc{\ol}{\overline}
\rnc{\sl}{\shoveleft}


\newtheorem{thm}{Theorem}

\newtheorem{cor}[thm]{Corollary}
\newtheorem{lem}[thm]{Lemma}

\theoremstyle{definition}

\newtheorem{rems}[thm]{Remarks}

\input{xypic}

\begin{document}

\begin{abstract}
  We prove a finite generation result for the coordinate ring of
  certain affine group schemes over a discrete valuation ring. This
  may be used to simplify the use of results of Prasad and Yu on
  quasi-reductive groups by Mirkovic and Vilonen in their work on
  geometric Langlands duality.
\end{abstract}

\maketitle

In \cite{prasad-yu}, Prasad and Yu defined quasi-reductive group
schemes and proved some structure theorems for them. Their work was 
motivated by a question of Vilonen, and their results are required in the
work of Mirkovic and Vilonen \cite{mirkovic-vilonen} on the geometric
construction of the Langlands dual group over arbitrary fields. One of
their results is that quasi-reductive group schemes are always of
finite type. In this note we give a simple proof of a more general
finite generation result which is essentially the best possible.

\section{}

Let $R$ be a DVR with quotient field $K$, residue field $k$, and let
$\ol{k}$ be an algebraic closure of $k$.

\begin{thm}

  Let $\mc{G}$ be a flat affine group scheme over $R$ and assume that
  $R$ is excellent.  If the generic fibre $\mc{G}_K$ is reduced and of
  finite type over $K$, the reduced special fibre
  $(\mc{G}_k)_{\mathrm{red}}$ is of finite type over $k$ and
  $\dim(\mc{G}_K) = \dim(\mc{G}_k)$, then $\mc{G}$ is of finite type
  over $R$.
  \label{thm:1}
\end{thm}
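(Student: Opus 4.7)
The plan is to show that $A := R[\mc{G}]$ is a finitely generated $R$-algebra by first producing a finitely generated $R$-Hopf subalgebra $B \subseteq A$ sharing the generic fibre with $A$, and then using the special-fibre hypotheses to show $B = A$. First, one may reduce to the case that $R$ is complete: $\hat R$ is faithfully flat over the Noetherian $R$ and $A$ is flat, so finite generation of $A$ descends from the base change to $\hat R$; the excellence hypothesis passes to $\hat R$.

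To construct $B$, choose finitely many algebra generators of $A_K$; by the fundamental theorem of coalgebras they are contained in a finite-dimensional $K$-subcomodule $V_K \subseteq A_K$. A saturation-and-flatness argument (the key point being that $V := V_K \cap A$ is saturated in $A$, so $A/V$ is $R$-flat, hence $V \otimes_R A$ is saturated in $A \otimes_R A$) shows that $V$ is an $R$-subcomodule with $V \otimes_R K = V_K$, and one checks that $V$ is finitely generated over $R$ under the hypotheses. The $R$-subalgebra $B \subseteq A$ generated by the matrix coefficients of $V$ is then a finitely generated $R$-Hopf subalgebra with $B \otimes_R K = A_K$; geometrically, $\ol{\mc G} := \Spec B$ is a flat affine finite-type $R$-group scheme, and there is a morphism $\pi : \mc{G} \to \ol{\mc G}$ of $R$-group schemes which is an isomorphism on generic fibres.

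The core step is to show $\pi$ is an isomorphism, equivalently $B = A$. On special fibres, $\pi_k : \mc{G}_k \to \ol{\mc G}_k$ is a morphism of affine $k$-group schemes with $\dim \ol{\mc G}_k = \dim \mc{G}_K = \dim \mc{G}_k$ (the first equality by flatness of $\ol{\mc G}$, the second by hypothesis). Exploiting that $V$ is saturated in $A$, one expects the induced map on reduced special fibres $(\mc{G}_k)_{\mr{red}} \to (\ol{\mc G}_k)_{\mr{red}}$ to be an isomorphism of finite-type $k$-group schemes of common dimension $d := \dim \mc{G}_K$. A Nakayama-type argument using the $\pi$-adic filtration together with the Hopf algebra structure on $A$, and with the excellence hypothesis controlling finiteness of normalizations, should then promote this reduced-level identification to the integral level.

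The main obstacle is this last step: ruling out that $A$ is strictly larger than $B$. In general, distinct finitely generated $R$-Hopf subalgebras can share the same generic fibre (for instance, the ``dilatation'' $B = R[\pi y] \subsetneq R[y]$ inside $\mb{G}_{a, R}$), so the specific construction of $B$ from the saturated subcomodule $V = V_K \cap A$ is essential. The delicate interplay of the dimension equality, the finite-typeness of $(\mc{G}_k)_{\mr{red}}$, the excellence of $R$, and the Hopf algebra structure on $A$ is precisely what should be needed to force $B = A$ for this choice of $B$.
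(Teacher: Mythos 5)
Your strategy founders on its core claim: even under all the hypotheses of Theorem \ref{thm:1}, the finite type model $\ol{\mc{G}} = \Spec B$ built from the saturated subcomodule $V = V_K \cap A$ need \emph{not} equal $\mc{G}$, so no refinement of the ``last step'' can close the argument. Take $R$ with residue characteristic $p > 0$, uniformizer $\pi$, and let $A = R[y, y^p/\pi] \subseteq K[y]$ with $y$ primitive. This is a flat Hopf subalgebra of $K[y]$: indeed $\Delta(y^p/\pi) = y^p/\pi \otimes 1 + 1 \otimes y^p/\pi + \pi^{-1}\sum_{0<i<p}\binom{p}{i}\, y^i \otimes y^{p-i}$, and each $\binom{p}{i}$ is divisible by $p \in (\pi)$. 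All hypotheses hold: $\mc{G}_K \cong \mb{G}_{a,K}$ is reduced of finite type; writing $A \cong R[Y,Z]/(\pi Z - Y^p)$ one sees $\mc{G}_k = \Spec k[Y,Z]/(Y^p)$, so $(\mc{G}_k)_{\mr{red}} \cong \mb{G}_{a,k}$ is of finite type; and both fibres have dimension $1$. Now run your construction with the algebra generator $y$ of $A_K$: $V_K = K + Ky$, and since every element of $A$ is an $R$-combination of the monomials $y^{i+pj}/\pi^j$, whose only members of degree $\le 1$ are $1$ and $y$, one gets $V = V_K \cap A = R + Ry$. Hence $B = R[y] \subsetneq A$, and your morphism $\mc{G} \to \ol{\mc{G}} = \mb{G}_{a,R}$ is finite and birational but not an isomorphism. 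Your intermediate expectation fails as well: $y$ is nilpotent in $A \otimes_R k$, so the induced map $(\mc{G}_k)_{\mr{red}} \to (\ol{\mc{G}}_k)_{\mr{red}}$ is the \emph{trivial} homomorphism $\mb{G}_{a,k} \to \mb{G}_{a,k}$. The failure is structural, not an unlucky choice of generators: $R[y]$ and $R[y, y^p/\pi]$ have the same generic fibre and the same saturated lattice $V = R + Ry$, so $B$ cannot ``know'' which of the two $A$ is. (A secondary unproved point: that $V = V_K \cap A$ is finitely generated over $R$ is itself nontrivial --- it fails for $A = \bigcup_n R[y/\pi^n]$, where $V = R + Ky$ --- and establishing it under the hypotheses is comparable in difficulty to the theorem. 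Also, your reduction to complete $R$ must explain why the generic fibre stays \emph{reduced} after completion; that, not the heredity of excellence, is where excellence is first used.)

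This example is exactly why the paper does not attempt to exhibit a single finite type model isomorphic to $\mc{G}$. Instead it writes $\mc{G} = \varprojlim_{i} \mc{G}_i$ over a directed system of \emph{all} relevant flat finite type models with $(\mc{G}_i)_K = \mc{G}_K$ (Prasad--Yu, \S 5.3, using completeness of $R$), and proves that the system stabilizes: the finiteness of the component group of $(\mc{G}_k)_{\mr{red}}$ together with \cite[2.2.6]{bruhat-tits2} and Lemma \ref{lem:bij} reduce to the case where all maps $(\mc{G}_k)_{\mr{red}} \to ((\mc{G}_i)_k)_{\mr{red}}$ are isomorphisms; then --- the key mechanism your sketch lacks --- one passes to normalizations $\wt{\mc{G}_i}$, finite over $\mc{G}_i$ by excellence, and shows via Zariski's main theorem that the transition maps between them are simultaneously open immersions and finite, hence isomorphisms cofinally; finally $\mc{A}$ is squeezed between the noetherian ring $\mc{A}_i$ and the finite $\mc{A}_i$-module $\Gamma(\wt{\mc{G}}, \mc{O}_{\wt{\mc{G}}})$. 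In the example above this squeeze is visible: $R[y] \subseteq R[y, y^p/\pi] \subseteq R[y,y^p/\pi]\widetilde{\ }$, and it is finiteness over $R[y]$, not equality with $R[y]$, that yields finite generation. To salvage your approach you would have to replace the single $B$ by the directed family of all such subalgebras and prove stabilization of the limit --- at which point you are reconstructing the paper's proof, and the ``Nakayama-type argument'' you invoke must be replaced by the normalization/ZMT argument.
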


The following is an immediate consequence.
\begin{cor}
  Let $\mc{G}$ be a flat affine group scheme over (an arbitrary DVR)
  $R$.  If the generic fibre $\mc{G}_K$ is smooth and of finite type
  over $K$, the reduced geometric special fibre
  $(\mc{G}_{\ol{k}})_{\mathrm{red}}$ is of finite type over $\ol{k}$
  and $\dim(\mc{G}_K) = \dim(\mc{G}_{\ol{k}})$, then $\mc{G}$ is of
  finite type over $R$.
  \label{cor:1}
\end{cor}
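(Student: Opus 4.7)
The plan is to deduce this from Theorem~\ref{thm:1} by a faithfully flat base change to a better-behaved DVR. Concretely, I would choose a complete DVR $R'$ flat over $R$ whose residue field is exactly $\ol{k}$. Such an $R'$ is obtained by first replacing $R$ by $\wh{R}$ (which has the same residue field $k$) and then invoking the Cohen structure theorem to lift $\ol{k}/k$ to a complete DVR flat over $\wh{R}$ sharing the same uniformizer (cf.\ Bourbaki, Alg.\ Comm.\ IX, \S 3, or EGA $0_{\mr{III}}$ 10.3.1). Being a nonzero flat extension of a DVR, $R \to R'$ is automatically faithfully flat.

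Next I would verify that $\mc{G} \times_R R'$ satisfies the hypotheses of Theorem~\ref{thm:1} over $R'$: $R'$ is complete Noetherian, hence excellent; the generic fibre $\mc{G}_K \times_K K'$ remains smooth (so reduced) and of finite type, as these properties are preserved by base change of fields; the special fibre is $\mc{G}_{\ol{k}}$, whose reduction is of finite type by hypothesis; and the two fibre dimensions are preserved under base change, so the equality $\dim \mc{G}_K = \dim \mc{G}_{\ol{k}}$ carries over. Theorem~\ref{thm:1} therefore gives that $\mc{G} \times_R R'$ is of finite type over $R'$. Since ``finite type'' descends along faithfully flat base change (EGA IV$_2$, 2.7.1), $\mc{G}$ itself is of finite type over $R$.

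The only non-formal step is the construction of $R'$. In equicharacteristic this is immediate (take $\ol{k}[[t]]$ over $\wh{R} \cong k[[t]]$), but in mixed characteristic one really needs the Cohen structure theorem, together with some care to keep $R'$ a DVR with the prescribed residue field. I expect this commutative-algebra input to be the only real obstacle; everything else is formal base change and fpqc descent.
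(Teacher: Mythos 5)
Your proof is correct and follows the same route as the paper: a faithfully flat base change to a complete DVR with residue field $\ol{k}$ (hence excellent), an application of Theorem~\ref{thm:1}, and descent of the finite type property. The paper states this in two sentences; your write-up simply makes explicit the construction of $R'$ and the verification that smoothness guarantees the reducedness hypothesis survives the residue field extension.
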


\begin{proof}
By faithfully flat descent we may replace $R$ by a complete discrete
valuation ring with algebraically closed residue field. Such
a ring is excellent, hence we may apply Theorem \ref{thm:1} to
conclude.
\end{proof}

\begin{proof}[Proof of Theorem \ref{thm:1}]
  Since $R$ is excellent, the generic fibre of $\mc{G} \times_R
  \hat{R}$ is reduced, hence by faithfully flat descent we may assume
  that $R$ is complete.  Let $\mc{A} = \Gamma(\mc{G},
  \mc{O}_{\mc{G}})$ be the coordinate ring of $\mc{G}$. By the
  discussion in \S 5.3 of \cite{prasad-yu} (here we use that $R$ is
  complete), we may write $\mc{G} = \varprojlim_{i \in I} \mc{G}_i$,
  where $I$ is a directed set and $\mc{G}_i$ are flat finite type
  affine group schemes over $R$.  Since $\mc{G}_K$ is of finite type
  over $K$ we may assume that the induced morphisms $\mc{G}_K \to
  (\mc{G}_i)_K$ are isomorphisms for all $i \in I$, so that we have
  $\mc{A} = \varinjlim_{i \in I} \mc{A}_i$, where $\mc{A}_i =
  \Gamma(\mc{G}_i, \mc{O}_{\mc{G}_i})$ and all the induced maps
  $\mc{A}_i \to \mc{A}$ are injective.  Since tensor products commute
  with direct limits, we have $\varinjlim_{i \in I} \mc{A}_i \otimes_R
  k = \mc{A} \otimes_R k$. Since the $\mc{G}_i$ are flat and of finite
  type, we have $\dim((\mc{G}_i)_k) = \dim(\mc{G}_k)$ for all $i \in
  I$.

  The group scheme $(\mc{G}_k)_{\mathrm{red}}$ is of finite type, so
  it has only finitely many connected (= irreducible) components. For
  each $i$, let $\mc{G}_{i,k}'$ be the union of those components of
  $(\mc{G}_i)_k$ which contain points of $\pi_i(\mc{G}_k)$, where
  $\pi_i: \mc{G} \to \mc{G}_i$ are the maps in the directed system.
  Since the $\pi_i$ are homomorphisms of group schemes, it follows
  that $\mc{G}_{i,k}'$ is an affine open subgroup scheme of
  $(\mc{G}_i)_k$. By \cite[2.2.6]{bruhat-tits2}, there exist affine
  open subschemes $\mc{G}_i'$ of $\mc{G}_i$ such that $(\mc{G}_i')_k =
  \mc{G}_{i,k}'$ for all $i \in I$. The $\mc{G}_i'$ are subgroup
  schemes of $\mc{G}_i$ and by construction the maps $\pi_i$ factor
  through the inclusion $\mc{G}_i' \subset \mc{G}_i$.  If $\mc{A}_i'$
  denotes the coordinate ring of $\mc{G}_i'$, then the induced map
  $\varinjlim_{i \in I}\mc{A}_i' \to \mc{A}$ is an isomorphism, hence
  $\mc{G} = \varprojlim_{i \in I}\mc{G}_i'$.  So by replacing
  $\mc{G}_i$ with $\mc{G}_i'$, we may assume that the map on component
  groups induced by the morphisms $\mc{G}_k \to (\mc{G}_i)_k$ are all
  isomorphisms.  By Lemma \ref{lem:bij} we may now also assume that
  all the morphisms $(\mc{G}_k)_{\mr{red}} \to
  ((\mc{G}_i)_k)_{\mr{red}}$ are isomorphisms.

  Let $\wt{\mc{G}_i}$ (resp. $\wt{\mc{G}}$) be the normalisation of
  $\mc{G}_i$ (resp. $\mc{G}$), so that we have a commutative diagram:
  \[
  \xymatrix{ \wt{\mc{G}} = \varprojlim \wt{\mc{G}_i} \ar[r]
    \ar[d]_{\nu} &
    \wt{\mc{G}_i} \ar[r]^{\wt{\pi_{i,j}}} \ar[d]_{\nu_i} & \wt{\mc{G}_j} \ar[d]_{\nu_j}\\
    \mc{G} \ar[r] & \mc{G}_i \ar[r]^{\pi_{i,j}} & \mc{G}_j \\
  }
  \]
  Since $R$ is complete, hence excellent, the $\nu_i$ are finite
  morphisms. The $\pi_{i,j}$ are finite type bijections, hence
  $\wt{\pi_{i,j}}$ are birational, quasi-finite and finite type
  morphisms. Since $\wt{\mc{G}_j}$ are also normal, it follows from
  Zariski's main theorem that the $\wt{\pi_{i,j}}$ are open
  immersions.  The morphisms induced by $\pi_{i,j}$ from
  $(\mc{G}_i)_k$ to $(\mc{G}_j)_k$ are finite since the induced
  morphisms on reduced schemes are isomorphisms.  Since the $\nu_i$
  are also finite, it follows that the morphisms from
  $(\wt{\mc{G}_i})_{k}$ to $(\wt{\mc{G}_j})_{k}$ induced by
  $\wt{\pi_{i,j}}$ are also finite.  The images of these morphisms are
  therefore both open and closed, hence a union of connected
  components.  It then follows that all the $\wt{\pi_{i,j}}$ are
  isomorphisms for $i,j$ in a cofinal subset of $I$.  By replacing $I$
  with such a subset, we may assume that $\wt{\mc{G}} = \wt{\mc{G}_i}$
  for all $i$; in particular, we may assume that $\wt{\mc{G}}$ is of
  finite type over $R$.

  Now for any $i$, we have morphisms
  \[
  \wt{\mc{G}} \to \mc{G} \to \mc{G}_i
  \]
  and the morphism $\wt{\mc{G}} \to \mc{G}_i$ is finite. Since
  $\mc{G}_i$ is a noetherian scheme, it follows that $\mc{G}$ is
  finite over $\mc{G}_i$, hence of finite type over $R$.
\end{proof}

\begin{lem}
  Let $X = \varprojlim_{i \in I} X_i$ be a directed inverse limit of
  irreducible affine schemes over a field $k$, with $\dim(X) =
  \dim(X_i)$ for all $i$ and $X_{\mathrm{red}}$, $X_i$ of finite type
  over $k$ for all $i$. Then there exists $J \subset I$, a cofinal
  subset, such that the induced morphisms $X_{\mr{red}} \to
  (X_j)_{\mr{red}}$ are isomorphisms for all $j \in J$.
  \label{lem:bij}
\end{lem}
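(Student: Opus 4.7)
The plan is to pass to reduced coordinate rings and exploit the fact that the nilradical commutes with filtered direct limits. Let $A_i = \Gamma(X_i, \mc{O}_{X_i})$, so that $A := \Gamma(X, \mc{O}_X) = \varinjlim A_i$, and set $B_i = (A_i)_{\mr{red}}$ and $B = A_{\mr{red}}$. Since an element of $A$ is nilpotent iff one of its representatives in some $A_i$ is, we have $B = \varinjlim B_i$, and hence $X_{\mr{red}} = \varprojlim (X_i)_{\mr{red}}$. Each $B_i$ is a domain because $X_i$ is irreducible and reduced, and $B$ is a finitely generated $k$-algebra by hypothesis. The conclusion of the lemma is thus equivalent to asserting that $B_i \to B$ is an isomorphism for $i$ in a cofinal subset.

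Next, I would use the dimension hypothesis to show that every map $B_i \to B$, as well as every transition map $B_i \to B_j$ for $j \geq i$, is injective. Suppose the image closure of $X \to X_i$ were a proper closed subset $Z \subsetneq X_i$. By irreducibility of $X_i$, $\dim Z < \dim X_i = \dim X$; but the map $X_{\mr{red}} \to X_i$ factors through $Z$, so $\dim X_{\mr{red}} \leq \dim Z$, contradicting $\dim X_{\mr{red}} = \dim X$. Hence $X \to X_i$ is dominant, so $B_i \hookrightarrow B$. The same reasoning applied to the factorization $X \to X_j \to X_i$ gives dominance of every transition map, and therefore $B_i \hookrightarrow B_j$. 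In particular, $B$ is a domain, being a directed union of domains.

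Since $B$ is finitely generated over $k$, a finite generating set lifts to some $B_j$, making $B_j \to B$ surjective and hence an isomorphism. For any $i \geq j$, the factorization $B_j \hookrightarrow B_i \hookrightarrow B$ has composite an isomorphism, which forces $B_i \to B$ to be surjective; combined with injectivity, it is an isomorphism. The subset $J = \{i \in I : i \geq j\}$ is then cofinal and does the job. There is no serious obstacle here; the two points requiring care are the (standard) commutation of nilradicals with filtered colimits, and the dimension-based injectivity step, where the hypothesis $\dim(X) = \dim(X_i)$ is used in an essential way.
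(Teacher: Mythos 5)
Your reduction to reduced coordinate rings and your surjectivity step (lifting a finite generating set of $B$ to some $B_j$) are both correct and match the paper's argument. The gap is in the injectivity step. From the fact that $X_{\mathrm{red}} \to X_i$ factors through a proper closed subset $Z \subsetneq X_i$ you infer $\dim X_{\mathrm{red}} \leq \dim Z$; but a morphism of schemes does not bound the dimension of its source by the dimension of the closure of its image --- think of the projection $\mathbb{A}^2 \to \mathbb{A}^1$, or a constant map. Worse, the intermediate claim you are trying to prove (that \emph{every} $\sigma_i : B_i \to B$ and every transition map is injective) is simply false under the hypotheses of the lemma. Take $I = \mathbb{N}$, all $X_i = \mathbb{A}^1$, with the transition map $X_2 \to X_1$ the constant map $t \mapsto 0$ and all later transition maps the identity. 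Then $X = \mathbb{A}^1$, every hypothesis of the lemma holds ($\dim X = \dim X_i = 1$, everything is of finite type, and the conclusion holds with $J = \{i \geq 2\}$), yet $\sigma_1 : B_1 \to B$ is $t \mapsto 0$, which is not injective, and $X \to X_1$ is not dominant. So injectivity can only be expected for a suitable cofinal set of indices, and it cannot be established before one knows something more about those indices.

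The repair is to run your two steps in the opposite order, which is exactly what the paper does. First use finite generation of $B$ to produce a cofinal $J$ with $\sigma_j : B_j \to B$ surjective for $j \in J$, i.e.\ with $X_{\mathrm{red}} \to (X_j)_{\mathrm{red}}$ a closed embedding. For a closed embedding the dimension inference you want \emph{is} valid: the image is closed and homeomorphic to $X_{\mathrm{red}}$, hence has dimension $\dim X = \dim X_j$, and a proper closed subset of the irreducible finite-type scheme $(X_j)_{\mathrm{red}}$ would have strictly smaller dimension. Therefore the image is all of $(X_j)_{\mathrm{red}}$, so $\ker \sigma_j$ cuts out everything and is contained in the nilradical of $B_j$, which is zero; thus $\sigma_j$ is injective as well, hence an isomorphism. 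This uses precisely the ingredients you identified --- commutation of nilradicals with filtered colimits, finite generation, and the dimension hypothesis together with irreducibility --- but the dimension argument must be applied \emph{after} surjectivity is known, not before.
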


\begin{proof}
  Since $\varprojlim_{i \in I} (X_i)_{\mr{red}} = (\varprojlim_{i \in
    I} X_i)_{\mr{red}}$ we may assume that all the $X_i$ (hence also
  $X$) are reduced and $X$ is of finite type.

  Let $\pi_i:X \to X_i$ be the natural morphisms, $A := \Gamma(X,
  \mc{O}_X)$, $A_i := \Gamma(X_i, \mc{O}_{X_i})$ and $\sigma_i:A_i \to
  A$ the induced $k$-algebra homomorphsims.  Let $f_1,f_2,\dots,f_r$
  be generators of $A$. We may find a cofinal $J \subset I$ such that
  $f_k \in \sigma_j(A_j)$ for all $k = 1,2,\dots,r$ and $j \in J$.
  This implies that $\pi_j$ are closed embeddings for all $j \in J$.
  Since the $X_j$'s are irreducible, the dimension condition implies
  that $\pi_j$ are isomorphisms for all $j \in J$.
\end{proof}

\begin{rems} (1) We do not know whether Theorem \ref{thm:1} is true
  without assuming $\mc{G}$ affine (the assumption is used to write
  $\mc{G}$ as an inverse limit of finite type group schemes). \\
  (2) If $K$ is of characteristic $0$ then by Cartier's theorem the
  assumption that the generic fibre be reduced in Theorem \ref{thm:1}
  is redundant. However, if $\mathrm{char}(K) = p >0$ this is not the
  case as shown by the following example: Let $\mc{G} = \varprojlim_{n
    \in \mb{N}}\alpha_{p/R}$ where the transition maps are all equal
  to the endomorphism of $\alpha_{p/R}$ induced by multiplication by a
  uniformizer of $R$. Then $\mc{G}_K \cong \alpha_{p/K}$, $\mc{G}_{k}
  \cong \mathrm{Spec}(k)$ and $\mc{G}$ is
  flat over $R$.  However $\mc{G}$ is not of finite type over $R$. \\
  (3) By faithfully flat descent, Theorem \ref{thm:1} and Corollary
  \ref{cor:1} have obvious analogues for group schemes over Dedekind
  schemes such that the group scheme is of finite type over a Zariski
  open subset of the base. The group scheme $\mr{Spec}
  (\Z[x/2,x/3,x/5,\dots])$ over $\Z$ (\cite[Remark 4.7]{onoda}), all
  of whose fibres are isomorphic to $\mb{G}_a$ over the corresponding
  residue field, shows that in general the finite type condition over
  a Zariski open subset of the base cannot be replaced by a more local
  condition. However, Prasad and Yu have shown \cite[Theorem
  1.5]{prasad-yu} that this can be done when all the fibres are
  reductive \\
  (4) Onoda \cite{onoda} has proved finite generation results for a
  large class of affine schemes over more general bases. However, his
  results do not apply in the situation of Theorem \ref{thm:1} since
  his assumption on the special fibre is not satisfied.
\end{rems}

{\bf Acknowledgements.} I thank Brian Conrad and Gopal Prasad for
their comments; in particular, the proof of Lemma \ref{lem:bij}, which
is simpler than the original, was suggested by Brian Conrad. I also
thank S.~M.~Bhatwadekar for informing me of the article \cite{onoda}
and for related discussions.

The first version of this note was written while I was visiting the
Korea Institute of Advanced Study; I thank Andreas Bender and the
staff of KIAS for their hospitality.

\def\cprime{$'$}


\begin{thebibliography}{1}

\bibitem{bruhat-tits2}
{\sc F.~Bruhat and J.~Tits}, {\em Groupes r\'eductifs sur un corps local. {II}.
  {S}ch\'emas en groupes. {E}xistence d'une donn\'ee radicielle valu\'ee},
  Inst. Hautes \'Etudes Sci. Publ. Math.,  (1984), pp.~197--376.

\bibitem{mirkovic-vilonen}
{\sc I.~Mirkovi{\'c} and K.~Vilonen}, {\em Geometric {L}anglands duality and
  representations of algebraic groups over commutative rings}, Ann. of Math.
  (2), 166 (2007), pp.~95--143.

\bibitem{onoda}
{\sc N.~Onoda}, {\em Algebras with affine fibres over an excellent ring}, J.
  Pure Appl. Algebra, 68 (1990), pp.~365--378.

\bibitem{prasad-yu}
{\sc G.~Prasad and J.-K. Yu}, {\em On quasi-reductive group schemes}, J.
  Algebraic Geom., 15 (2006), pp.~507--549.
\newblock With an appendix by Brian Conrad.

\end{thebibliography}
\end{document}